\theoremstyle{definition}
\newtheorem{definition}{Definition}
\newtheorem{remark}[definition]{Remark}
\newtheoremstyle{mytheorem}{0.5cm}{0.2cm}{\slshape}{ }{\bfseries}{.}{ }{}
\theoremstyle{mytheorem}
\newtheorem{theorem}[definition]{Theorem}
\newtheorem{proposition}[definition]{Proposition}
\renewcommand{\u}{\mathsf{u}}
\renewcommand{\v}{\mathsf{v}}
\newcommand{\e}{\mathsf{e}}
\newcommand{\n}{\mu }
\newcommand{\X}{\mathcal{X}}
\newcommand{\E}{\mathbf{E}}
\renewcommand{\P}{\mathbf{P}}
\renewcommand{\phi}{\varphi}
\renewcommand{\kappa}{\varkappa}
\newcommand{\ext}{{\rm ext}}
\newcommand{\cone}{{\rm cone}}
\newcommand{\LM}{\cite{LacMol11}}
\newcommand{\comp}{\cite{Lac}}
\newcommand{\U}{\mathcal{U}}
\newcommand{\sE}{\mathsf{E}}
\renewcommand{\U}{\mathscr{U}}
\newcommand{\V}{\mathscr{V}}
\newcommand{\F}{\mathscr{F}}
\newcommand{\B}{\mathsf{B}}
\newcommand{\uc}{unit covariance}
\newcommand{\iv}{indicator covariogram}
\newlength{\querylen}
\numberwithin{equation}{section}
\numberwithin{definition}{section}
\author{Rapha\"el Lachi\`eze-Rey}
\date{ }
\begin{document}
\bibliographystyle{plain}

\title{The convex class of realisable unit covariances} 

\maketitle



\begin{abstract}

This paper concerns the characterisation of second order marginals for random sets in a discrete setting. Under the instance of unit covariances, this problem possesses a combinatorial symmetry, exploited jointly in the companion paper to give a heuristic procedure to check realisability.  In particular we disprove Matheron's conjecture, and explicit partially the structure of the convex body formed by realisable unit covariances in a finite set.

\end{abstract}

{\bf Keywords}: Random sets, realisability problem, second order marginals, covariance, convex polytopes.\\

{\bf AMS Classification:} 60D05, 52B11.
 
\section{Introduction}
 
 Let $\X$ be a set, and call  $\B_{\X}=\{-1,1\}^{\X}$ the class of binary functions on $\X$. Call \emph{unit field}  on $\X$ a random element $X$ in $ \B_{\X}$ The \emph{unit covariance} of $X$ is then defined by 
\begin{align}
\label{eq:def-uc}
\rho^{X}_{x,y}=\E X_xX_y=2\P(X_x=X_y)-1,\quad x,y\in  \X.
\end{align}

The central question here is the inverse \emph{realisability} problem, given a bivariate function $(\rho  _{x,y}; ~x,y\in \X)$ in the class $\F_{\X}$ of symmetric functions on $\X$, to check whether it can be realised by some unit field $X$ (i.e. $\rho =\rho^{X}$). The typical example of a non-realisable, or non-admissible, symmetric function $\rho $ is, with $\X=\{1,2,3\}$, 
\begin{align*}
\rho _{1,2}=1,\quad\rho _{1,3}=1,\quad \rho _{2,3}=-1,\quad \rho _{i,i}=1;i=1,2,3,
\end{align*}
because if $\rho =\rho^{X}$ for some unit field $X$, then $X_1=X_2$ a.s., $X_2=X _{3}$ a.s., but $X_1=-X _{3}$ a.s..

Measurability issues don't matter in this paper, so consider that $\B_{\X}$ is endowed with the discrete topology.
The affine transformation $X\mapsto 2X-1$ transfers unit fields to $\{0,1\}$-valued processes, assimilable to random sets. Such marginal problems arise in many contexts,  the article \cite{FriCha12}, for instance,  calls \emph{contextual} such an admissible function, in the sense that it can be inserted in a real physical context, with applications in the fields of information theory, game theory, quantum mechanics. The authors use an entropic approach to solve some particular related questions.  Our goals turn more towards materials science and geostatistics, where such a characterisation could serve many purposes in modelisation and estimation, see the companion paper \comp~for more details. On a more theoretical level,  characterising the class of second order characteristics would provide insights on a spectral theory for random sets, which apart from Koch et al.   \cite{KocOhsSch03} is currently a gap in the literature.  Matheron identified a combinatorially compact description of the class of realisable unit covariances if $card(\X)\leq  6$, highlighting an elegant combinatorial structure. \\

  Call $\U_{\X}$ the class of  admissible unit covariances on $\X\times \X$. 
It is clear under the form (\ref{eq:def-uc}) that to belong to $\U_\X$ a function $\rho $ has to be semi-definite positive, and lie in the space $\F_{\X}^{1}$ of functions taking the value $1$ on the diagonal ($\rho _{x,x}=1$ for all $x\in \X$), but those conditions are not sufficient.
Let  $ \F_{\X}'$ be the space of symmetric functions with finite support, and define for $\alpha \in \F_{\X}'$  
\begin{align*}
\mathbf{g}_{\alpha }({\u} )=\sum_{x,y\in \X}\alpha _{x,y}{\u} _x{\u} _y; {\u}\in \B_{\X}.
\end{align*} A necessary and sufficient condition for $\rho \in \F_{\X}$ to be a \uc~is that for every  function $\alpha \in \F_{\X}'$, 
\begin{align}
\label{eq:intro-positivity}
\sum_{x,y\in \X}\alpha _{x,y}\rho _{x,y}\geq \kappa _{\alpha }:=\inf_{{\u}\in \B_{ \X}}\mathbf{g}_{\alpha }({\u} ),\,
\end{align}
because then the operator that $\rho $ induces on the vector space spanned by constant functions and functions $\{\mathbf{g}_{\alpha }:\,\alpha \in \F_{\X}'\}$ is positive, and one can apply Kantorovitch and Riesz-Markov theorems (see \LM) (the necessity of (\ref{eq:intro-positivity}) is a straightforward consequence of the positivity of the mathematical expectation). In this context, $\rho $ is sometimes loosely referred to as being positive.

The inequalities (\ref{eq:intro-positivity}) are the linear inequalities determining the convex set $\U_{\X}$, and we show in this article how this approach, effective in \cite{She63} and \cite{Qui08}, can be fruitful.  Even if $\X$ is finite, the positivity conditions (\ref{eq:intro-positivity}) involve a priori an infinity of relations, and it is not even known what is the right-hand side of (\ref{eq:intro-positivity}) for general $\alpha $.  In the discrete setting and in a slightly different formulation, McMillan \cite{McM55} has proved that it is enough to satisfy (\ref{eq:intro-positivity}) for $\alpha $ \emph{corner positive}, and $\kappa_{\alpha }=0$, but the corner positivity is poorly understood; it has been numerically studied in \cite{Qui08} if $\X$ has less than $7$ elements. At the present time it is not possible to give an algorithmic procedure that determines the positivity of a given function $(\rho _{x,y})$, and it seems like a very difficult challenge with the tools available, therefore the consensus is to give necessary conditions as sharp as possible for the admissibility of $\rho $. This problem has been posed by MacMillan \cite{McM55} in the field of telecommunications. It is more or less implicit in many  articles, and has been to the author's knowledge first  addressed directly by Shepp \cite{She63}, and more recently by Quintanilla \cite{Qui08}. A series of works by Torquato and his coauthors (see   \cite{JiaStiTor07}, and \cite{Tor02} Sec.  2.2 and references therein), in the field of materials science, gather known necessary conditions and illustrate them in many $2$D and $3$D   models. This question was developed alongside in the field of geostatistics; Matheron \cite{Mat93} has found via arithmetic considerations a wide class of necessary conditions, that he has proven  to be sufficient if $\X$ has cardinality less or equal to $5$, and he has conjectured these conditions to be sufficient for any $\X$. Those conditions form the widest known class of necessary conditions used in related practical problems.
Some other authors do not attack frontally  this question,  but address  the realisability problem within some particular classes of models, e.g. Gaussian, mosaic, or boolean model (see   \cite{ChiDel99,Eme10,Lan02, Mas72}). In the companion paper \comp, we use the theoretical results of the present work to give a heuristic algorithm allowing to discard some inadmissible covariances, and other applications.\\
 
Our method is based on a direct study of $\U_{\X}$, which is a convex polytope if $\X$ has finite cardinality, and can therefore be applied  tools from convex geometry.
In Section \ref{sec:convex-geometry}, we provide some structural information on $\U_{\X}$, such as its dimension and extreme points, and from there on $\X$ is essentially assumed to have finite cardinality. As an application we give a bound for the number of different states of a random field realising an arbitrary admissible covariance. Section \ref{sec:matheron} revolves around the linear inequalities characterising $\U_{\X}$, i.e.  finding the hyperplanes supporting its facets if $\X$ has finite cardinality; we give new necessary conditions that are the support of the heuristic algorithm developed in \comp, and disprove numerically the conjecture of Matheron on the form of the supporting hyperplanes normals. Section \ref{sec:Un} is devoted to more theoretical facts about $\U_{\X}$, such as the elements of its boundary and its graph structure for faces of low dimensions.

\subsection{Realisability and convex geometry}
\label{sec:convex-geometry}

It is a fairly trivial fact that $\U_{\X}$ is convex, indeed if $X_{1}$ and $X_{2}$ are two unit fields of $\X$, then the segment $[\rho^{X_{1}}, \rho^{X_{2}}] $ is comprised in $ \U_{\X}$ because for every $t   \in[0,1]$, $t    \rho^{X_{1}}+(1-t    )\rho^{X_{2}}$ is the unit covariance of  
\begin{equation*}
X=
\begin{cases}
X_{1}$ if $B=1\\
X_{2}$ if $B=0
\end{cases}
\end{equation*}
where $B$ is an independent Bernoulli variable with parameter $t$.  The convexity   is exploited here to lay out some notation and basic facts about realisability. See the appendix at the end of the paper for notation and basic vocabulary  in convex geometry. 
 For a function $\v$ on $\X$, denote   $(\v\otimes \v)_{x,y}=\v_x\v_y, \v\otimes \v \in \F_{\X}$.
If $X$ is a  unit field, then   $\rho^X=\E[X\otimes X]$, where the expectation is taken component-wise. 
 Furthermore the structure of  vertices  of $\U_{\X}$ can be made explicit.

\begin{proposition}
\label{prop:extremes-UN}
The extreme points of $\U_{\X}$ are the ${\u} \otimes {\u} $, ${\u} \in\B_\X$.
\end{proposition}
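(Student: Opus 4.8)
The plan is to first identify $\U_{\X}$ explicitly as the convex hull of the family $\{\u\otimes\u:\u\in\B_{\X}\}$, and then, for each $\u$, to exhibit a linear functional that exposes the point $\u\otimes\u$. Since the law of a unit field $X$ is a probability measure on the (finite, when $\card(\X)<\infty$) set $\B_{\X}$, one has $\rho^{X}=\E[X\otimes X]=\sum_{\u\in\B_{\X}}\P(X=\u)\,\u\otimes\u$, so every element of $\U_{\X}$ is a convex combination of the points $\u\otimes\u$; conversely, any such convex combination is realised by the corresponding mixture of deterministic fields (as in the convexity argument preceding the statement). Hence $\U_{\X}=\mathrm{conv}\{\u\otimes\u:\u\in\B_{\X}\}$. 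By the elementary fact that the extreme points of the convex hull of a finite set form a subset of that set, every extreme point of $\U_{\X}$ is of the form $\u\otimes\u$, and it remains only to check that each such point is genuinely extreme.

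For that direction I would produce, for each $\u$, an exposing hyperplane. Take $\alpha=\u\otimes\u\in\F_{\X}'$ and evaluate the associated functional $\rho\mapsto\sum_{x,y}\alpha_{x,y}\rho_{x,y}$ at a generator: this gives $\mathbf{g}_{\alpha}(\v)=\sum_{x,y\in\X}\u_x\u_y\v_x\v_y=\big(\sum_{x\in\X}\u_x\v_x\big)^{2}$. Since $\u_x,\v_x\in\{-1,1\}$, the quantity $\sum_{x}\u_x\v_x$ lies in $\{-\card(\X),\dots,\card(\X)\}$ and attains $\pm\card(\X)$ only when $\v=\pm\u$; hence $\mathbf{g}_{\alpha}(\v)\le\card(\X)^{2}$, with equality \iff $\v=\pm\u$. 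Because $\u\otimes\u=(-\u)\otimes(-\u)$, this says that the functional attains its maximum over the generating set $\{\v\otimes\v\}$ at the single point $\u\otimes\u$. Thus $\u\otimes\u$ is an exposed point of $\U_{\X}$, and in particular an extreme point, which completes the argument.

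The relation that $\u\otimes\u=\v\otimes\v$ holds precisely when $\u=\pm\v$ (seen by fixing one coordinate, since $\u_x\u_{x_0}=\v_x\v_{x_0}$ forces a global sign) shows that the distinct generators are indexed by the $\{\pm1\}$-classes of $\B_{\X}$, so no two coincide and the exposing functional above really does single out one vertex. The only genuine subtlety, and the step I expect to be the crux, is this strictness check — verifying that $\mathbf{g}_{\u\otimes\u}$ separates $\u$ strictly from every other sign class, rather than merely that $\u\otimes\u$ is \emph{a} maximiser; the reduction of $\U_{\X}$ to a convex hull and the passage from exposed to extreme are then immediate. If one wishes to allow infinite $\X$, the functional $\alpha=\u\otimes\u$ no longer has finite support, so one interprets $\U_{\X}$ as a closed convex hull and the argument requires an additional topological step; the case $\card(\X)<\infty$, which is our main concern, is covered by the above.
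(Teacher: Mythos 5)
Your proof is correct for finite $\X$, but it takes a genuinely different route from the paper's. The paper argues directly from the definition of extreme point: for one direction it observes that a convex combination $t\rho^{X_1}+(1-t)\rho^{X_2}$ with all entries in $\{-1,1\}$ forces $\rho^{X_1}=\rho^{X_2}$ entrywise (so $\u\otimes\u$ is extreme), and for the other it splits any $\rho^{X}$ with an entry in $(-1,1)$ as a nontrivial convex combination by conditioning on the event $\{X_x=X_y\}$ versus its complement. You instead identify $\U_{\X}$ as the convex hull of the finite set $\{\u\otimes\u\}$ (which immediately bounds the extreme points from above) and then expose each $\u\otimes\u$ by the functional $\rho\mapsto\langle\rho,\u\otimes\u\rangle$, whose value on a generator $\v\otimes\v$ is $(\sum_x\u_x\v_x)^2\le\card(\X)^2$ with equality \iff $\v=\pm\u$; the strictness check you single out as the crux is indeed the key step and is carried out correctly. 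Your argument buys slightly more than the paper's: it shows each vertex is \emph{exposed}, not merely extreme, and the exposing functionals $\mathbf{g}_{\u\otimes\u}$ are precisely the test functionals of (\ref{eq:intro-positivity}) (the case $\e=\u$ of Theorem \ref{thm:matheron-conditions}(ii)), so it meshes naturally with the later development; it also makes explicit the identity $\U_{N}=\mathrm{conv}\{\u\otimes\u\}$ that the paper only uses implicitly via Minkowski--Carath\'eodory. What the paper's argument buys in exchange is generality: the proposition is stated for arbitrary $\X$, and the conditioning argument works verbatim there, whereas your first step (writing $\rho^{X}$ as a \emph{finite} convex combination of point masses) genuinely requires $\card(\X)<\infty$ — you flag this honestly, but note that replacing the convex hull by a closed convex hull would require a topology on $\F_{\X}$ and a Milman-type theorem, which is more than a cosmetic addition.
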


\begin{proof}
Take ${\u} \in \B_{\X}$, and assume that  ${\u} \otimes {\u} =t   \rho^{X_1}+(1-t   )\rho^{X_2}$ for some unit fields $X_1,X_2$ and $t   \in [0,1], t   \neq 0,1$. Since for every $x,y\in \X$ we have $t   \rho^{X_{1}}_{x,y}+(1-t   )\rho^{X_{2}}_{x,y}\in \{-1,1\}$, necessarily  $\rho^{X_{1}}_{x,y}=\rho^{X_{2}}_{x,y}\in \{-1,1\}$ for every pair $( {x,y})$ of $\X^{2}$. It follows that  both $X_{1}$ and $X_{2}$ are   deterministic with $\rho^{X_{1}}=\rho^{X_{2}}$, and therefore  ${\u}\otimes {\u}$ is an extreme point.

Conversely, take $\rho =\rho^{X}\in \U_{\X}$, assume that for some $x,y\in \X$, $t   =\rho^{X}_{x,y} \in (-1,1)$, meaning $\P(X_x\neq X_y)\notin\{0,1\}  $, and define the two unit fields $X_{1}$ and $X_{2}$ with respective laws 
\begin{align*}
\mathbb{P}(X_{1}\in A)=\mathbb{P}(X\in A~|~X_x\neq X_y)\\
\mathbb{P}(X_{2}\in A)=\mathbb{P}(X\in A~|~X_x=X_y)
\end{align*}
for $A$ a subset of $\B_{\X}$.
We have, by conditioning, 
\begin{equation*}
\rho^{X}=t    \rho^{X_{1}}+(1-t   )\rho^{X_{2}}.
\end{equation*}
It follows that $\rho^{X}$ lies in the relative interior of the segment $[\rho^{X_{1}},\rho^{X_{2}}]$ (this segment is not a singleton because $1=\rho^{X_{1}}_{x,y}\neq \rho^{X_{2}}_{x,y}=0$).
Thus  $\rho^{X}$ is not an extreme point of $\U_{\X}$.
\end{proof}

In the sequel we focus on the finite case $\X=[N]=\{1,2,\dots ,N\}$, $N\geq 1$. For the sake of clarity index $[N]$ is replaced with $N$. Functions of $\F_{N}$ are  identified with $N\times N$ matrices.

\begin{remark}[Notation for triangular arrays]
Call $\pi$ the projection operator that takes a matrix $\rho\in \F_{N} $ to its supra-diagonal components $\pi(\rho)=(\rho_{i,j};\; 1 \leq i<j \leq N)$, and call $\F_{N}^*=\pi(\F_{N} )$ the space of supra-diagonal triangular arrays. An element $\rho$ of $\F_{N}^*$ is  represented as the triangular array 
\begin{equation*}
\left(\begin{array}{cccc} \quad & \rho_{ {1,2}}  & \dots &   \rho_{{ 1,N  }} \\&  & \ddots & \dots \\  & &   & \rho _{{N-1,N}} \\ & & &  \end{array}\right).
\end{equation*}\end{remark}

Since any $\rho \in \U_{N}$ automatically has the diagonal filled with $1$'s, it is somehow intuitive that mapping $\U_{N}$ to $\U_{N}^*=\pi(\U_{N})$ does not lose any relevant information. The dimension of $\F^*_{N}$ is $d_{N}:=\frac{N(N-1)}{2}$, and we prove below that $\U_{N}^*$ is a full-dimensional convex subset of $\F_{N}^*$.
The elements said to be under the \emph{product form} are those  that can be written, for some  ${\v} \in \mathbb{R}^N$,   
\begin{equation*}
{\v} \obslash {\v} :=\pi({\v} \otimes {\v} )=({\v} _i{\v} _j)_{1 \leq i< j \leq N}.
\end{equation*}
\begin{proposition}
\label{prop:dimension}
$\U_{N}^*$ has a non-empty interior in $\F_{N}^*$. Its extreme points are the $2^{N -1}$ vertices of the form ${\u} \obslash {\u} $ for $u\in \B_{N}$.
\end{proposition}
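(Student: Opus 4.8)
The plan is to transport everything through the projection $\pi$, which identifies $\U_{N}$ with $\U_{N}^*$, and then to reduce the full-dimensionality to a linear independence statement on the hypercube. First I would record that the restriction of $\pi$ to the affine subspace $\F_{N}^1$ of symmetric arrays with unit diagonal is an affine bijection onto $\F_{N}^*$: a symmetric matrix with $1$'s on the diagonal is completely determined by its supra-diagonal entries, and conversely every triangular array extends uniquely to such a matrix. Since every $\rho\in\U_{N}$ lies in $\F_{N}^1$, the restriction $\pi|_{\U_{N}}\colon\U_{N}\to\U_{N}^*$ is an affine isomorphism of convex sets.

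Because affine isomorphisms carry extreme points to extreme points, Proposition \ref{prop:extremes-UN} immediately gives that the extreme points of $\U_{N}^*$ are exactly the images $\pi(\u\otimes\u)=\u\obslash\u$, $\u\in\B_{N}$. To obtain the count $2^{N-1}$ I would check that $\u\otimes\u=\v\otimes\v$ if and only if $\v=\pm\u$: setting $w_i=\u_i\v_i\in\{-1,1\}$, the identities $\u_i\u_j=\v_i\v_j$ multiply out to $w_iw_j=1$ for all $i<j$, so the $w_i$ are all equal and $\u=\pm\v$. Hence the $2^{N}$ product forms collapse to $2^{N-1}$ distinct vertices. As a byproduct, since $\U_{N}$ is the convex hull of the finitely many $\u\otimes\u$ and $\pi$ is linear, $\U_{N}^*=\pi(\U_{N})=\mathrm{conv}\{\u\obslash\u:\u\in\B_{N}\}$ is a polytope, so ``extreme points'' and ``vertices'' coincide as claimed.

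For the non-empty interior I would argue that these vertices affinely span $\F_{N}^*$, whose dimension is $d_{N}=N(N-1)/2$; a full-dimensional polytope has non-empty interior. Equivalently, no affine hyperplane of $\F_{N}^*$ contains all the vertices. Suppose some normal $\alpha$ and constant $c$ satisfied $\sum_{i<j}\alpha_{i,j}\rho_{i,j}=c$ for every $\rho=\u\obslash\u$, that is
\begin{equation*}
\sum_{1\leq i<j\leq N}\alpha_{i,j}\,\u_i\u_j=c\qquad\text{for all }\u\in\B_{N}.
\end{equation*}
The decisive observation is that the monomials $\u\mapsto\u_i\u_j$ ($i<j$) together with the constant function $1$ are linearly independent as functions on $\B_{N}$: they are the distinct Fourier--Walsh characters $\chi_{\{i,j\}}$ and $\chi_{\emptyset}$, which form part of an orthogonal basis of the space of real functions on $\{-1,1\}^{N}$. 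Constancy therefore forces $\alpha_{i,j}=0$ for all $i<j$ and $c=0$, so the only such normal is trivial and the affine hull of the vertices is all of $\F_{N}^*$.

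The step I expect to carry the real content is this last one, the full-dimensionality; the statement about extreme points is a formal consequence of Proposition \ref{prop:extremes-UN} once $\pi$ is seen as an affine isomorphism. The linear independence of the degree-$\leq 2$ characters is standard, but it is worth stating cleanly, since it is exactly what prevents the polytope from degenerating into a lower-dimensional slice of $\F_{N}^*$.
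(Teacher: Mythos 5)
Your proof is correct, and it diverges from the paper's in one substantive place: the full-dimensionality argument. The paper proceeds \emph{primally}, by explicitly constructing unit fields whose covariances are $0$ and the $d_N$ canonical vectors $\mathsf{e}_{i_0,j_0}$ (take all coordinates i.i.d.\ Rademacher, then duplicate coordinate $i_0$ into slot $j_0$), which exhibits $d_N+1$ affinely independent points inside $\U_N^*$. You argue \emph{dually}: no affine hyperplane of $\F_N^*$ can contain all the vertices $\u\obslash\u$, because the Walsh characters $\chi_{\{i,j\}}$ together with $\chi_\emptyset$ are linearly independent on $\B_N$. Both are complete; the paper's version has the side benefit of producing concrete realisable covariances (including $0$, used later as an interior point for ray-shooting), while yours needs no auxiliary random fields and ties the statement to standard Boolean Fourier analysis. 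For the extreme points, your observation that $\pi|_{\F_N^1}$ is an affine bijection onto $\F_N^*$ is a cleaner route than the paper's, which only uses that images of extreme points exhaust the extreme points of the image and then reproves extremality of each $\u\obslash\u$ by ``mimicking'' the argument of Proposition~\ref{prop:extremes-UN}; the identification $\u\obslash\u=\v\obslash\v\Leftrightarrow\u=\pm\v$ and the resulting count $2^{N-1}$ are handled the same way in both. One small remark: your parenthetical claim that $\U_N$ is the convex hull of the finitely many $\u\otimes\u$ is immediate from $\rho^X=\sum_{\u}\P(X=\u)\,\u\otimes\u$ rather than a consequence of the extreme-point characterisation, but nothing in your argument depends on this.
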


\begin{proof}
It suffices to prove that $\F_{N}^{*}$ contains at least $d_{N}$ linearly independent vectors. Take $i_{0}<j_{0}$ in $[N]$. Let $X_{k}, k\neq  j_{0}$ be independent  Rademacher variables  ($\P(X_{i}=1)=\P(X_{i}=-1)=1/2$) and define the unit field $X=(X_{1},\dots ,X_{j_{0}-1},X_{i_{0}},X_{j_{0}+1},\dots ,X_{N})$. Its unit covariance is the canonical vector $\mathsf{e}_{i_{0},j_{0}}=(1_{\{_{i=i_{0},j=j_{0}}\}})_{1 \leq i < j \leq N}$. Therefore   $\U_{N}^*$ contains $0$ and the $d_{N}$ such canonical vectors, and has nonempty interior in $\F_{N}^{*}$.  

In virtue of Proposition \ref{prop:extremes-UN} the extreme points of $\U_{N}$ are the $\u \otimes \u $ for $\u \in \B_{N}$, whence the extreme points of $\U_{N}^{*}=\pi (\U_{N})$ are of the form ${\u} \obslash {\u} , {\u} \in \B_{N}$. Conversely we can prove that each ${\u} \obslash {\u} $ is an extreme point by mimicking the arguments from the proof of Proposition \ref{prop:extremes-UN}, simply by assuming $x<y$. All the ${\u} \obslash {\u} $ are hence extreme points, and we have ${\u} \obslash {\u} ={\v} \obslash {\v} $ if and only if ${\u} ={\v} $ or ${\u} =-{\v} $. It follows that the number of extreme points is the number of binary vectors, under the identification ${\u} \equiv-{\u} $; we arrive at $2^{N-1}$.
\end{proof}

  A consequence of these  remarks is that the number of different  values taken by a random set realising a given unit covariance can be chosen to be no larger than $d_{N}+1$.

\begin{proposition}
Any unit covariance $\rho\in \U_{N}$ can be realised by a random field $X$ that takes at most $d_{N}+1$ distinct values.
\end{proposition}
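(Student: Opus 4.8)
The plan is to combine Carathéodory's theorem with the description of the extreme points of $\U_{N}$ obtained in Proposition \ref{prop:extremes-UN}.

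First I would pin down the dimension of the convex body. Every $\rho \in \U_{N}$ carries $1$'s on the diagonal, so $\U_{N}$ lies in the affine subspace $\F_{N}^{1}$ whose free coordinates are exactly the supra-diagonal entries; equivalently, $\pi$ restricts to an affine isomorphism of $\F_{N}^{1}$ onto $\F_{N}^{*}$, which has dimension $d_{N}$. By Proposition \ref{prop:dimension} the image $\U_{N}^{*}=\pi(\U_{N})$ has nonempty interior in $\F_{N}^{*}$, so $\U_{N}$ is full-dimensional in $\F_{N}^{1}$, i.e.\ its affine hull has dimension $d_{N}$.

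Next I would apply Carathéodory's theorem inside this affine hull. Proposition \ref{prop:extremes-UN} identifies $\U_{N}$ as the convex hull of the points $\u \otimes \u$, $\u \in \B_{N}$. Hence any $\rho \in \U_{N}$ admits a representation $\rho = \sum_{k=1}^{m} t_{k}\, \u^{(k)} \otimes \u^{(k)}$ with $\u^{(k)} \in \B_{N}$, $t_{k} > 0$, $\sum_{k} t_{k} = 1$, and $m \leq d_{N}+1$, using no more than $d_{N}+1$ of these vertices.

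Finally I would read this convex combination back as a random field: let $X$ take the value $\u^{(k)}$ with probability $t_{k}$, $k=1,\dots,m$. Computing component-wise, $\rho^{X} = \E[X \otimes X] = \sum_{k=1}^{m} t_{k}\, \u^{(k)} \otimes \u^{(k)} = \rho$, so $X$ realises $\rho$ and takes at most $m \leq d_{N}+1$ distinct values, as claimed. I do not anticipate any genuine difficulty here; the one point demanding care is the dimension bookkeeping, namely invoking Carathéodory in the $d_{N}$-dimensional affine hull of $\U_{N}$ rather than in the ambient space $\F_{N}$ of larger dimension, which would only yield the weaker bound $\tfrac{N(N+1)}{2}+1$.
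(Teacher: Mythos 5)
Your proof is correct and takes essentially the same route as the paper's: the paper applies the Minkowski--Carath\'eodory theorem to $\U_{N}^{*}=\pi(\U_{N})$ inside the $d_{N}$-dimensional space $\F_{N}^{*}$, which is exactly your step of invoking Carath\'eodory in the $d_{N}$-dimensional affine hull $\F_{N}^{1}$, and then reads the convex combination of at most $d_{N}+1$ vertices $\u\otimes\u$ back as the law of a unit field. Your closing remark about avoiding the ambient space $\F_{N}$ is precisely the dimension bookkeeping the paper handles by passing to $\pi$.
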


\begin{proof}
Applying the Minkowski-Carath\'eodory theorem to $\U_{N}^*$ in $\F^*_{N}$,  $\pi(\rho)$ can be expressed as the convex combination of $d_{N}+1$ extreme points ${\u} ^{k}\obslash {\u} ^{k} , {\u} ^{k}\in \B_{N}$, $1\leq  k \leq d_{N}+1$ 
\begin{equation*}
 \pi(\rho)=\sum_{k=1}^{d_{N}+1}p_{k}{\u}^{k}\obslash {\u}^{k},
\end{equation*}
with $p_{k}\geq 0, \sum_{k}p_{k}=1$, meaning $\rho$ is the \uc~of the unit field $X$ which law is defined by 
\begin{equation*}
P(X={\u} ^{k})=p_{k}\in [0,1]
\end{equation*}
for every $1 \leq k \leq d_{N}+1$.
\end{proof}

\begin{remark}
\label{rmk:inradius}
Another consequence is that $\U_{N}^{*}$ contains a non-empty open ball. Using Gaussian unit covariances, it is proved in   \comp~ that the euclidean ball of $\F_{N}^{*}$ centred in $0$ with radius $\sqrt{2}/\pi $ is comprised in $\U_{N}^{*}$.
\end{remark}

\section{Checking realisability numerically and Matheron's conjecture}
\label{sec:matheron}

This section focuses on the practical problem of checking, when $\X=[N], N \geq 1$, is finite, the validity of a given function $\rho $.  We saw in Proposition \ref{prop:dimension} that the problem is properly posed in $\F_{N}^*$, and since $\U_{N}^*$ is a polytope, it can be written as a finite intersection of half spaces, 
\begin{equation*}
\U_{N}^*=\bigcap_{k=1}^{h_{N}} W_{{\n}^{k}}^{o_{k}}
\end{equation*}
where $h_{N}$ is the number of  facets, the ${\n}^{k}\in \F_{N}^*$ are the outer normals of $\U_{N}^*$, the $o_{k}\in \mathbb{R}$ are the corresponding offset values, and \begin{equation*}
W_{{\n}^{k}}^{o_{k}}=\{\rho\in\F^*_{N}: \langle \rho,{\n}^{k}\rangle \leq o_{k}\}.
\end{equation*}

For a finite-dimensional vector space $V$, call $ \langle \cdot , \cdot    \rangle_{V}$ the canonical scalar product on $V$. All it takes to find $o_{k}$ knowing ${\n}^{k}$ is   to compute the infimum of $\langle {\n}^{k},\cdot\rangle_{\F_{N}^{*}}$ on the $2^{N-1}$ extreme points of $\U_{N}^*$ (prop. \ref{prop:dimension}).  The computational problem consists in finding the normals  ${\n}^{k}$, the complexity of computing the corresponding offsets is much smaller, see Table \ref{table}.
In \cite{Mat93}, Matheron studied the structure of the convex set constituted by realisable covariograms, similar to that of $\U_{N}$. For $N \leq  5$,
he computed by hand the normal vectors  and detected a recurring pattern;  he then conjectured that this form should be valid for every $N \geq 1$. We transposed and extended his results with numeric computations  here to \uc s.
Introduce the class 
\begin{align*}
\sE_{N}=\{{\e} \in\mathbb{Z}^N:\, \sum_{i=1}^{N}{\u} _{i}{\e} _{i }=1\text{ for some binary vector }{\u} \in \B_{N}=\{-1,1\}^N\}.\\
\end{align*} 

The following theorem gives a neat characterisation of unit covariances for $N\leq 6$.

\begin{theorem}
\label{thm:matheron-conditions}
\begin{itemize}
\item[(i)] Take $N\leq 6$.  Every outer normal ${\n}^{k}, 1\leq  k\leq h_{N}$, can be written under the form ${\e}^{k} \obslash{\e}^{k} $ for some ${\e} \in \sE_{N}$. A matrix $\rho\in\F_{N}^1$ belongs to $\U_{N}$ if and only if  for every  ${\e} \in\sE_{N}$,
\begin{equation}
\label{eq:Matheron}
 \sum_{i,j=1}^{N}\rho_{i j}{\e} _{i}{\e} _{j}\geq 1.
\end{equation}
\item[(ii)] Let $N \geq 1$, and $\rho\in\U_{N}$. For every ${\e} \in \mathbb{Z}^N$ with odd sum, $\rho$ satisfies (\ref{eq:Matheron}).
\end{itemize}

\end{theorem}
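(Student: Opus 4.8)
The plan is to treat the two parts separately, since (ii) is a short probabilistic computation that I would establish first, while (i) rests partly on it and partly on a finite structural verification.

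For part (ii), the key is to recognise the quadratic form on the left of (\ref{eq:Matheron}) as a second moment. Writing $\rho=\rho^X$ with $\rho_{ij}=\E[X_iX_j]$ and pulling the (finite) sum inside the expectation, I would obtain
\begin{equation*}
\sum_{i,j=1}^N\rho_{ij}{\e}_i{\e}_j=\E\Big[\sum_{i,j=1}^N X_iX_j{\e}_i{\e}_j\Big]=\E\big[S^2\big],\qquad S:=\sum_{i=1}^N X_i{\e}_i.
\end{equation*}
Since $X_i\in\{-1,1\}$ and ${\e}_i\in\mathbb{Z}$, the random variable $S$ is integer-valued, and because $X_i{\e}_i\equiv{\e}_i\pmod 2$ for each $i$, its parity is that of $\sum_i{\e}_i$, which is odd by hypothesis. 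Hence $S$ is almost surely a nonzero (odd) integer, so $S^2\geq 1$ pointwise and $\E[S^2]\geq 1$. This is exactly (\ref{eq:Matheron}), and the argument needs no finiteness or smallness assumption on $N$.

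For part (i), I would first observe that the necessity direction is already contained in (ii): any ${\e}\in\sE_N$ has odd sum (reduce $\sum_i{\u}_i{\e}_i=1$ modulo $2$, using ${\u}_i\equiv1$), so $\sE_N$ is a subset of the odd-sum vectors and (\ref{eq:Matheron}) holds for every $\rho\in\U_N$ and every ${\e}\in\sE_N$. The next step is to identify the offset attached to a product normal ${\e}\obslash{\e}$. Using $\sum_{i,j}{\u}_i{\u}_j{\e}_i{\e}_j=(\sum_i{\u}_i{\e}_i)^2$, the extremal (minimal) value of the full quadratic form over the vertices ${\u}\obslash{\u}$, ${\u}\in\B_N$, of $\U_N^*$ equals $\min_{{\u}\in\B_N}(\sum_i{\u}_i{\e}_i)^2$; the parity argument of (ii) shows this minimum is $\geq 1$, and it equals exactly $1$ precisely when ${\e}\in\sE_N$, by the very definition of $\sE_N$. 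Thus for ${\e}\in\sE_N$ the sharp supporting inequality carried by the direction ${\e}\obslash{\e}$ is exactly (\ref{eq:Matheron}) with right-hand side $1$.

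The remaining and genuinely hard ingredient is the structural claim that, for $N\leq 6$, every facet normal of $\U_N^*$ is of the product form ${\e}\obslash{\e}$ with ${\e}\in\sE_N$. I do not expect a closed-form proof here: by Proposition \ref{prop:dimension}, $\U_N^*$ is the full-dimensional polytope in $\F_N^*$ (dimension $d_N$) that is the convex hull of the $2^{N-1}$ explicit vertices ${\u}\obslash{\u}$, and for each $N\leq 6$ one performs a vertex-to-facet (dual) conversion of this concrete vertex list and checks that every resulting facet normal is, up to sign and scaling, some ${\e}\obslash{\e}$ with ${\e}\in\sE_N$. Once this enumeration is in hand, sufficiency follows immediately: if $\rho\in\F_N^1$ satisfies (\ref{eq:Matheron}) for all ${\e}\in\sE_N$, then $\pi(\rho)$ meets every facet halfspace of $\U_N^*$ and hence lies in $\U_N^*$; since $\rho$ already carries $1$'s on the diagonal it is the unique preimage of $\pi(\rho)$, so $\rho\in\U_N$. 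The main obstacle is precisely this finite verification: the product-form pattern is a combinatorial coincidence that holds up to $N=6$ but not beyond (indeed the paper's disproof of Matheron's conjecture shows it fails for larger $N$), so no inductive or purely analytic argument can replace the explicit computation.
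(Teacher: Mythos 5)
Your proposal is correct and follows essentially the same route as the paper: part (ii) via the second-moment identity $\sum_{i,j}\rho_{ij}{\e}_i{\e}_j=\E\bigl(\sum_i{\e}_iX_i\bigr)^2$ together with the parity argument, and part (i) by reducing the offsets to the minimum of $\bigl(\sum_i\u_i{\e}_i\bigr)^2$ over the vertices $\u\obslash\u$ and delegating the claim that every facet normal of $\U_N^*$ is of product form with ${\e}\in\sE_N$ for $N\le 6$ to an explicit vertex-to-facet enumeration (the paper uses \texttt{cdd+}). You correctly identify that this finite computational verification is the irreducible core of (i) and cannot be replaced by a general argument, as the failure of the pattern at $N=7$ shows.
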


\begin{proof}
(i) The outer normals ${\n}^{k}$ of $\U_{N}^{*}$ can be listed by a linear programming algorithm which input is  the $2^{N-1}$ extreme points of $\U_{N}^{*}$ and output is the outer normals and corresponding offset values of the polytope spanned by  these extreme points. We used the  \texttt{cdd+}  algorithm \cite{Fuk}, the list of these normals can be found at \\{\texttt{http://www.math-info.univ-paris5.fr/~rlachiez/realisability}}. We then checked that each normal ${\n}^{k}$ could indeed be put under the form ${\e} ^{k} \obslash{\e} ^{k} $ for some ${\e} ^{k} \in \sE_{N}$.  Thus  $\rho \in \F_{N}^{1}$ is realisable if and only if for every $1\leq k\leq h_{N}$
\begin{align}
\label{eq:eq1}
\langle \pi (\rho ), {\e} ^{k}\obslash {\e} ^{k}\rangle_{\F_{N}^{*}} \geq \inf_{\U_{N}^*}\langle \cdot , {\e} ^{k}\obslash {\e} ^{k} \rangle_{\F_{N}^{*}}.\end{align}
Since the minimum of a linear form on a convex polytope is necessarily reached in an extreme point,  Prop. \ref{prop:dimension} yields that (\ref{eq:eq1}) is equivalent to   
\begin{align*}
\langle\pi ( \rho) , {\e} ^{k}\obslash {\e} ^{k}\rangle_{\F_{N}^{*}} \geq \inf_{{\u} \in \B_{N}}\langle {\u} \obslash {\u}   , {\e} ^{k}\obslash {\e} ^{k} \rangle_{\F_{N}^{*}}, 1\leq k\leq h_{N}.
\end{align*}
Using the fact that $\rho \in \F^{1}_{N}$, $\sum_{i=1}^{N}\rho _{ii}({\e} ^{k}_{i})^{2}=\sum_{i=1}^{N}{\u} _{i}{\u} _{i}({\e}  ^{k}_{i})^{2} $ for every ${\u} \in \B_{N},1\leq k\leq h_{N}$, the prior condition is therefore  equivalent to 
\begin{align*}
\langle \rho  ,{\e}  ^{k}\otimes {\e}  ^{k}  \rangle_{\F_{N}}\geq \inf_{u\in \B_{N}}\langle {\u} \otimes {\u}  ,{\e}  ^{k}\otimes {\e}  ^{k}  \rangle_{\F_{N}}=\inf_{{\u} \in \B_{N}}\left(\sum_{i=1}^{N}{\u} _{i}{\e}  ^{k}_{i}\right)^{2}= 1  
\end{align*}
because $\sum_{i=1}^{N}{\u} _{i}{\e}  ^{k}_{i}$ is an odd number, and the value $1$ is reached for the particular ${\u} \in \B_{N}$ such that $\sum_{i=1}^{N}{\e}  ^{k}_{i}{\u} _{i}=1$. Finally, a matrix $\rho \in \F_{N}^{1}$ indeed belongs to $\U_{N}$ if and only if 
\begin{align*}
\sum_{i,j=1}^{N}\rho _{ij}{\e}  ^{k}_{i}{\e}  ^{k}_{j}\geq 1\text{ for every }1\leq k\leq h_{N}.
\end{align*} The necessity is contained in point (ii).\\
(ii) If $\rho=\rho_{X}$ for some unit field $X$,  and ${\e} \in\mathbb{Z}^N$ is such that $\sum_{i}{\e} _{i} $ is an odd number,
\begin{equation*}
\langle \rho_{X},{\e} \otimes {\e} \rangle_{\F_{N}}=\E \left(\sum_{i}{\e} ^{k}X_{i}\right)^2 \geq 1
\end{equation*}
because $\sum_{i}{\e} _{i}X_{i}$ is a.s. an odd integer. \end{proof}

\begin{remark}
\begin{enumerate}
\item Even though $\sE_{N}$ is infinite, point  (i) provides a finite time procedure to check the realisability of a given matrix $\rho $, see the details in the companion paper \comp.
\item Any vector of $\sE_{N}$ has odd sum, thus point (ii) indeed applies to $\sE_{N}$.
\item Matheron only made computations up to $N=5$, for which he found that we can furthermore state that ${\e} _{i}\in \{-1,0,1\}$, but numerical computations showed  that this is no longer true for $N=6$.
\item The point (ii) also originates in \cite{Mat93}.
\item The point  (i) concerns the normal vectors to all the facets of the polytope $\U_{N}^{*}$. Matheron originally formulated his result only for the normal vectors of the facets touching the vertex $(1,\dots ,1)$. In the meantime he mentioned  the idea that this could be used for other vertices  by exploiting the combinatorial symmetry of unit covariances. \end{enumerate}
\end{remark}

We give in table \ref{table} the  number of facets for $N \leq 7$.
\begin{table}[h!]\centering\begin{tabular}{|c|c|c|}
\hline
$N$ & dimension $d_{N}$ & $h_{N}$\\
\hline 
3& 3 & 4\\
4 & 6 &16\\
5 & 10 & 56\\
6 & 15 &368\\
7 & 21 &116 764\\
\hline
\end{tabular} 
\caption{ number of facets of $\U_{N}^{*}$}
\label{table}
\end{table}
It explodes as the dimension increases.
  For $N=8$, the algorithm was stopped in dimension $d_{8}=28$  after that  $2~ 216~ 100$  outer normals were found, no more memory space being available. To go further in the computations, one has to go deeper in understanding the structure of $\U_{N}^*$. Section \ref{sec:Un} gathers some metric and topological facts about $\U_{N}^{*}$ that can help optimise the processing time and understand the structure of $\U_{N}^{*}$.

 Theorem \ref{thm:matheron-conditions} provides necessary conditions that are used in the companion paper \comp~to design an algorithm able to discard some spherical variograms as admissible covariograms, a recurrent  problem  in geostatistics (see \cite{Lan02}, Sec. 3.2.2).

\subsection{Matheron's conjecture is not true}
\label{sec:matheron-false}

Matheron's conjecture is appealing, as if it were true it would provide  a  procedure to efficiently  check the realisability of a matrix for any $N\geq 1$. We show in this section via theoretic and  numeric arguments that the conjecture fails at $N=   7$. Recall that convexity-related notation and vocabulary are introduced in the appendix.\\
 
The conjecture was originally stated for the covariogram of a random set $Y\subseteq [N]$
\begin{align*}
\gamma ^ {Y}_{i,j}=\frac{1}{2}\P(1_{Y} ( x)\neq 1_{Y} (y)),\quad i,j\in [N],
\end{align*}
where $1_{Y}$ is the indicator function of $Y$. 
Calling $ \V_{N}$ the class of admissible covariograms (convex for the same reasons than  $\U_{N}$), and $\V_{N}^{*}=\pi (\V_{N})$, Matheron conjectured that the outer normals of $\cone(0;\V_{N}^{*})$ are  of the form  ${\e}  \obslash {\e}  $ for ${\e}  $ in $\mathbb{Z} ^{N}$ with $\sum_{i=1}^{N}{\e}  _{i}=1$.  Now $\Phi(\gamma_{Y} ):=1-4\gamma_{Y} $ is the \uc~ of the unit field $X=2Y-1$, therefore easy computations show that the outer normals of $\cone(1;\U_{N}^{*})$ (where ${1}=(1,\dots ,1)=\Phi(0)$) 
are  those same  ${\e}  \obslash{\e}  $. 
 We prove below that the conjecture, even under a slightlier general form, is equivalent to Th. \ref{thm:matheron-conditions}.

\begin{remark}Matheron conjectured that conditions (\ref{eq:Matheron}) characterise matrices contained in $\cone(0;\V_{N}^{*}),$ more precisely that for $\gamma \in \F_{N},$
\begin{align}
\label{eq:true-matheron}
\gamma \in \cone(0;\V_{N}) \text{ if and only if }\sum_{1\leq i<j \leq N}\gamma _{ij}{\e}_{i}{\e}_{j}\leq 0
\end{align}
for every ${\e}\in \sE_{N}$, but not that  they characterise the whole convex $\V_{N}^*$ as can be  seen in the literature.

\end{remark}

\begin{theorem}For any $N\geq 1$, 
Matheron's conjecture is equivalent to the conjecture that a matrix $\rho\in\F_{N}^1$ is a unit covariance if and only if it satisfies (\ref{eq:Matheron}).
\end{theorem}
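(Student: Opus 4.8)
The plan is to establish the equivalence by carefully unpacking the two conjectures and showing that the affine map $\Phi(\gamma)=1-4\gamma$ translates one into the other. The key observation, already recorded in the excerpt, is that $\Phi$ sends the covariogram $\gamma_Y$ of a random set $Y$ to the unit covariance $\rho^X$ of the unit field $X=2Y-1$; hence $\Phi$ is a bijection between $\V_N$ and $\U_N$, carrying $0$ to the vertex $\onev=(1,\dots,1)$. Since $\Phi$ is affine and invertible, it maps $\cone(0;\V_N)$ to $\cone(\onev;\U_N)$ and preserves the combinatorial structure of faces and supporting hyperplanes, up to the sign reversal coming from the factor $-4$. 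I would first make this dictionary explicit at the level of the quadratic forms: for a fixed ${\e}\in\sE_N$, I would compute $\sum_{i<j}\rho_{ij}{\e}_i{\e}_j$ in terms of $\sum_{i<j}\gamma_{ij}{\e}_i{\e}_j$ using $\rho=1-4\gamma$ on the off-diagonal, so that the inequality (\ref{eq:Matheron}) for $\rho$ becomes exactly the inequality (\ref{eq:true-matheron}) for $\gamma$.

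Next I would argue both implications at the level of the stated conjectures. For the forward direction, assuming Matheron's conjecture (\ref{eq:true-matheron}) characterises $\cone(0;\V_N)$, I would show that a matrix $\rho\in\F_N^1$ satisfying (\ref{eq:Matheron}) corresponds, via $\gamma=\Phi^{-1}(\rho)=(1-\rho)/4$, to a $\gamma$ satisfying (\ref{eq:true-matheron}); by the conjecture this $\gamma$ lies in $\cone(0;\V_N)$, and I then need to recover that $\rho$ itself lies in $\U_N$ rather than merely in the cone. Here is where the distinction between the cone $\cone(0;\V_N)$ and the full body $\V_N^*$ matters: the conjecture only concerns the cone at $0$, so I must use the normalisation provided by $\rho\in\F_N^1$ (equivalently $\gamma_{ii}=0$) together with the specific element of $\sE_N$ realising the infimum $1$ to pin down the correct scaling and land inside $\U_N$. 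The converse implication runs symmetrically, starting from the conjectured characterisation of $\U_N$ by (\ref{eq:Matheron}) and pushing it through $\Phi$.

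The main obstacle I anticipate is exactly this passage between a cone condition and a full-body condition. Matheron's conjecture as stated in (\ref{eq:true-matheron}) is about membership in $\cone(0;\V_N)$, a translated cone generated at the single vertex $0$, whereas the target statement about $\rho$ is about membership in the whole polytope $\U_N$. I expect the resolution to hinge on the combinatorial symmetry of unit covariances alluded to in the fifth remark after Theorem \ref{thm:matheron-conditions}: the normals ${\e}\obslash{\e}$ attached to the facets through one vertex determine, by the symmetry group acting on $\B_N$, the normals at every vertex, so that controlling the cone at $\onev$ is genuinely equivalent to controlling all of $\U_N$. I would therefore need to verify that the class $\sE_N$ is stable under the relevant coordinate sign-flips and that the offset value $1$ in (\ref{eq:Matheron}) is invariant, which reduces the global polytope description to the single-vertex cone description and closes the equivalence. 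The remaining verifications — that $\Phi$ is affine, bijective, and maps the relevant faces correctly — are routine once the quadratic-form identity is in hand.
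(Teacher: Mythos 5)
Your plan matches the paper's proof in all essentials: the affine dictionary $\Phi(\gamma)=1-4\gamma$ translating (\ref{eq:Matheron}) into (\ref{eq:true-matheron}) and carrying $\cone(0;\V_{N}^{*})$ to $\cone(\onev;\U_{N}^{*})$, and the resolution of the cone-versus-polytope issue via the coordinate sign-flip symmetry $\rho\mapsto(\u_i\u_j\rho_{ij})$ (under which $\sE_N$ and the offset $1$ are invariant), which places $\pi(\rho)$ in the vertex cone at every $\u\obslash\u$ and hence in $\U_{N}^{*}$. The converse direction is likewise handled exactly as you outline, by identifying the facets through $(1,\dots,1)$ as those whose inequalities become equalities there, i.e.\ those with $\sum_i\e_i=1$, and pushing them back through $\Phi^{-1}$.
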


\begin{proof} Let $\gamma\in\F_{N}^0=\{\gamma \in \F_{N}:\gamma _{i,i}=0\text{ for all }i\in [N]\}$ and $\rho\in\F_{N}^1$ be linked up by $\rho= 1-4 \gamma$, hence $\rho$ is realisable as a \uc~iff $\gamma$ is realisable as an \iv.

Let us assume that Matheron's conjecture is true, and that $\rho$ satisfies (\ref{eq:Matheron}); we must prove that $\rho$ is realisable. In particular, for ${\e} \in\sE_{N}$ with unit sum (i.e. $\sum_{i}{\e} _{i}=1$)
\begin{equation*}
\sum_{1\leq i,j \leq N}{\e} _{i}{\e} _{j}\gamma_{ij}=\frac{1}{4}\sum_{1 \leq  i,j \leq N}{\e} _{i}{\e} _{j}(1-\rho_{ij}) \leq 0,
\end{equation*}
whence by Matheron's conjecture $\pi( \gamma)$ lies in $\cone(0;\V_{N}^{*})$. Applying $\Phi$ yields that  $\pi( \rho)$ is in $\cone(1; \U_{N}^{*})$. Let us call, for ${\u} \in \{-1,1\}^N$, $\theta^{{\u} }$ the transformation of $\F_{N}$ defined by 
\begin{equation*}
\theta^{{\u} }\mu =({\u} _{i}{\u} _{j}\mu _{ij})_{1\leq i,j \leq N}, \mu \in \F_{N}.
\end{equation*}
The rotated matrix $\theta^{{\u} } \rho$ also verifies 
\begin{equation*}
\sum_{ij}{\e} _{i}{\e} _{j}(\theta^{{\u} } \rho)_{ij} \geq 1
\end{equation*}
for ${\e} \in \mathbb{Z}^N$ with unit sum because $\rho$ satisfies (\ref{eq:Matheron}). It follows that $\pi (\theta^{{\u} } \rho)$ also belongs to $\cone(1;\U_{N}^{*})$, and therefore $\pi (\rho )\in \cone(u\obslash u; \U_{N}^{*})$. Thus in each extreme point ${\u} \obslash {\u} $ of $\U_{N}^{*}$, ${\u} \in \{-1,1\}^{N} $ (see Prop. \ref{prop:extremes-UN}), $ \pi (\rho)$ is contained in $\cone({\u} \otimes {\u} ; \U_{N}^{*})$. Thus $\pi (\rho)$ is in $\U_{N}^{*}$, meaning it is realisable. 

In the other direction, if (\ref{eq:Matheron}) characterises the realisability of \uc s, then $\cone({1};\U_{N}^{*})$ is characterised by inequalities in (\ref{eq:Matheron}) that become equalities if $\rho _{ij}=1$, i.e. inequalities such that $\sum_{i}{\e}  _{i}=1$. It follows by applying $\Phi^{-1}$ that $\cone(0;\V_{N}^{*})$ is characterised by (\ref{eq:true-matheron}), which is exactly Matheron's conjecture.

\end{proof}

In the light of the previous proposition, Matheron hence conjectured that every normal vector of $\U_{N}^*$ is under a product form $\v\obslash \v$. We give in the file \texttt{rho7.ine} the normals of $\U_{7}^{*}$ computed with \texttt{cdd+}. The following array $\tau \in\F_{N}^*$ is one of these normals.
\begin{equation*}
\tau = \left(\begin{array}{ccccccc} \quad &  -2 & 1 & 0 & 1 & 1 & 1 \\  &   & -2 & 1 & -2 & -2 & -3 \\  &   &   & -1 & 1 & 1 & 2 \\  &   &    &   & -1 & -1 & -2 \\  &   &   &   &   & 1 & 2 \\  &   &   &   &   &   & 2 \\  &   &   &   &   &   &  \end{array}\right)
\end{equation*}
 It is clear that $\tau $ is not under the product form $\v\obslash \v$ for some $\v\in \mathbb{R}^N$ because $\tau _{14}=0$, but neither its first line nor its $4$-th column are filled with $0$'s. In conclusion Matheron's conjecture is not true for $N=7$. We proved in a similar way that it was not true for $N=8$.

\section{The structure of  $\U_{N}^*$}
\label{sec:Un}

The task of giving a tractable characterisation of realisable covariances, or \uc s, seems a very hard one, not to say impossible. This problem, combinatorial in nature, relies heavily on arguments from convex geometry to analyse $ \U_{N}^{*}$. We give in this section topological facts about $\U_{N}^*$, in order to better apprehend its structure both for a geometric and a graph-theoretic description. The convex body $\U_{N}^{*}$ bears some peculiar properties, which might lead to think
that the combinatorial problem of characterising $\U_{N}^{*}$ is better apprehended with this geometric approach, and stands on its own as an interesting theoretical problem.

With the current knowledge of $\U_{N}^{*}$ one should more rely on numeric considerations to test the validity of a given model. Unfortunately, as is apparent in Table \ref{table}, the complexity of such a task  explodes as the cardinality $N$ of $\X$ increases. Understanding better the structure of $\U_{N}^*$ can enable programmers to design a more adapted linear programming algorithm to find the outer normals of $\U_{N}^*$, which amounts to find the necessary and sufficient conditions of realisability.\\

Let us start by recalling that $\U_{N}^*$ is a polytope of $\F_{N}^*$   which all $2^{N-1}$ extreme points are vertices of the hypercube (Prop. \ref{prop:dimension} ). 
  For algorithms  based on ray-shooting queries it is interesting to have an interior point of $\U_{N}^*$, or of its dual. Here $0$ plays perfectly this role, and this can be quantified.

\begin{proposition}
Calling $B_{0}(r)$ the euclidean ball of $\F_{N}^{*}$ with radius $r$ centred in $0$, we have
\begin{equation*}
B_{0}(\sqrt{2}/\pi) \subseteq \U_{N}^* \subseteq B_{0}(\sqrt{d_{N}}).
\end{equation*}

\end{proposition}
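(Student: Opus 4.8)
The plan is to treat the two inclusions separately: the right-hand one is immediate, while the left-hand one is the content of Remark \ref{rmk:inradius} and is what I would establish by a Gaussian thresholding construction.

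For the outer bound I would argue directly from the vertices. By Proposition \ref{prop:dimension} the extreme points of $\U_N^*$ are the arrays $\u\obslash\u$, all of whose $d_N$ coordinates equal $\pm1$; hence each vertex lies exactly on the sphere of radius $\sqrt{d_N}$. Since $B_0(\sqrt{d_N})$ is convex and contains every vertex, it contains their convex hull $\U_N^*$. Equivalently, any $\rho\in\U_N$ has $\rho_{ij}=\E X_iX_j\in[-1,1]$, so $\|\pi(\rho)\|^2=\sum_{i<j}\rho_{ij}^2\le d_N$.

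For the inner bound I would realise every small enough array by thresholding a Gaussian vector. Given $\rho\in\F_N^1$ with $\|\pi(\rho)\|\le\sqrt2/\pi$, I form the symmetric matrix $M$ with $M_{ii}=1$ and $M_{ij}=\sin(\tfrac\pi2\rho_{ij})$ for $i\neq j$. Assuming for a moment that $M$ is a correlation matrix, I take a centred Gaussian vector $(G_1,\dots,G_N)$ with covariance $M$ and set $X_i=\mathrm{sign}(G_i)$. The classical arcsine (Sheppard) identity then gives $\rho^X_{ij}=\E\,\mathrm{sign}(G_i)\,\mathrm{sign}(G_j)=\tfrac2\pi\arcsin M_{ij}=\rho_{ij}$, the last step using $|\rho_{ij}|<1$ so that $\tfrac\pi2\rho_{ij}\in(-\tfrac\pi2,\tfrac\pi2)$, where $\arcsin\circ\sin$ is the identity. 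Hence $\pi(\rho)\in\U_N^*$, and as $\pi(\rho)$ ranges over all of $B_0(\sqrt2/\pi)$ the inclusion follows.

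The hard part is the deferred claim that $M$ is positive semi-definite, and this is where the radius $\sqrt2/\pi$ is pinned down. Writing $M=I+A$ with $A$ the hollow symmetric matrix $A_{ij}=\sin(\tfrac\pi2\rho_{ij})$, I would use the linear bound $|\sin(\tfrac\pi2 t)|\le\tfrac\pi2|t|$ to control the Frobenius norm:
\begin{equation*}
\|A\|_F^2=2\sum_{i<j}\sin^2\!\big(\tfrac\pi2\rho_{ij}\big)\le\frac{\pi^2}2\sum_{i<j}\rho_{ij}^2=\frac{\pi^2}2\,\|\pi(\rho)\|^2\le\frac{\pi^2}2\cdot\frac{2}{\pi^2}=1.
\end{equation*}
Since $A$ is symmetric, $\lambda_{\min}(A)\ge-\|A\|_{\mathrm{op}}\ge-\|A\|_F\ge-1$, so $\lambda_{\min}(M)=1+\lambda_{\min}(A)\ge0$ and $M$ is indeed a correlation matrix. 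The only genuine obstacle is this spectral estimate; note that $\sqrt2/\pi$ is exactly the largest radius for which the computation above forces $\|A\|_F\le1$, which is why this constant — rather than a larger one — is what the argument produces.
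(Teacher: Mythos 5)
Your proposal is correct, and both halves deserve comment. The outer inclusion is argued exactly as in the paper: the vertices $\u\obslash\u$ lie on the sphere of radius $\sqrt{d_N}$, so the convex hull sits inside $B_0(\sqrt{d_N})$. For the inner inclusion, however, the paper gives no argument at all --- its proof consists of the single sentence that this ``translates Remark \ref{rmk:inradius}'', which in turn defers to the companion paper and to ``classical results about Gaussian covariances''. You supply the missing content in full: the map $\rho_{ij}\mapsto\sin(\tfrac{\pi}{2}\rho_{ij})$, the arcsine (Grothendieck--Sheppard) identity $\E\,\mathrm{sign}(G_i)\,\mathrm{sign}(G_j)=\tfrac{2}{\pi}\arcsin M_{ij}$, and the spectral step $\lambda_{\min}(M)\ge 1-\|A\|_F\ge 0$, which is where the constant $\sqrt{2}/\pi$ is actually produced. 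The estimates check out: $|\sin(\tfrac{\pi}{2}t)|\le\tfrac{\pi}{2}|t|$ gives $\|A\|_F^2\le\tfrac{\pi^2}{2}\|\pi(\rho)\|^2\le 1$, the operator norm is dominated by the Frobenius norm, $|\rho_{ij}|\le\sqrt{2}/\pi<1$ ensures $\arcsin\circ\sin$ acts as the identity, and the (possibly degenerate) Gaussian vector with PSD unit-diagonal covariance $M$ always exists with $G_i\neq 0$ a.s. So your route is in spirit the one the paper intends (Gaussian unit covariances), but it is self-contained where the paper is not, and it makes transparent why the radius $\sqrt{2}/\pi$ is the one this method yields.
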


\begin{proof}The first inclusion translates Remark \ref{rmk:inradius}, itself relying on classical results about Gaussian covariances and exploited in  \comp.  
 The second traduces the fact that the extreme points of $\U_{N}^*$ are vertices of the hypercube.  
 \end{proof}

The next statement locates the singular admissible covariances on the boundary of $\U_{N}^{*}$.

\begin{theorem}
\label{thm:singular}
\label{thm:singular-boundary}  (i)The unit covariance $\rho $ of a unit field $X$ is singular if and only if for some non-trivial real deterministic coefficients $\lambda _{1},\dots ,\lambda _{N}$ the components of $X$ satisfy
\begin{align*}
\sum_{k}\lambda _{k}X_{k}=0\, a.s..
\end{align*}
(ii) Singular \uc s lie on the boundary of $\U_{N}^*$.
\end{theorem}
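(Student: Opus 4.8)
The plan is to reduce both parts to the single quadratic-form identity
\[
\lambda^{T}\rho\,\lambda \;=\; \sum_{i,j}\lambda_i\lambda_j\,\E X_iX_j \;=\; \E\Bigl[\bigl(\textstyle\sum_k\lambda_k X_k\bigr)^{2}\Bigr],
\]
valid for every $\lambda\in\mathbb{R}^N$ and every unit field $X$ with covariance $\rho=\rho^{X}$.

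For (i), I would first recall that $\rho$ is symmetric and \sdp (as noted in the introduction, every element of $\U_N$ is semi-definite positive), so singularity of the matrix $\rho$ is equivalent to the existence of a nontrivial $\lambda$ with $\rho\lambda=0$, and for a \sdp matrix this holds \iff $\lambda^{T}\rho\lambda=0$: writing $\rho=A^{T}A$ one has $\lambda^{T}\rho\lambda=\|A\lambda\|^{2}$, so the quadratic form and the kernel vanish on exactly the same vectors. Combined with the identity above, $\lambda^{T}\rho\lambda=0$ reads $\E[(\sum_k\lambda_kX_k)^{2}]=0$, i.e. $\sum_k\lambda_kX_k=0$ almost surely. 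Chaining these equivalences with $\lambda$ nontrivial delivers (i) in both directions.

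For (ii), fix a singular $\rho=\rho^{X}\in\U_N$ and pick $\lambda\neq0$ with $\rho\lambda=0$. The decisive preliminary step is to check that $\lambda$ has at least two nonzero coordinates: if $\lambda$ were supported on a single index $i_0$, then $\rho\lambda=0$ would force the $i_0$-th column of $\rho$ to vanish, contradicting $\rho_{i_0i_0}=1$. Consequently the array $w:=\lambda\obslash\lambda=(\lambda_i\lambda_j)_{i<j}$ is a nonzero element of $\F_N^{*}$. Using $\mu_{ii}=1$ for every $\mu\in\U_N$, the identity splits as $\lambda^{T}\mu\lambda=\|\lambda\|^{2}+2\langle\pi(\mu),w\rangle_{\F_N^{*}}$, so for every realisable $\mu$ one gets $\langle\pi(\mu),w\rangle_{\F_N^{*}}=\tfrac12(\lambda^{T}\mu\lambda-\|\lambda\|^{2})\geq-\tfrac12\|\lambda\|^{2}$, with equality at $\mu=\rho$ since $\lambda^{T}\rho\lambda=0$. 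Thus the nonconstant linear functional $\langle\,\cdot\,,w\rangle_{\F_N^{*}}$ attains its minimum over $\U_N^{*}$ exactly at $\pi(\rho)$.

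I would conclude by the standard supporting-hyperplane argument: a point minimizing a nonconstant linear form over a convex body cannot be interior, since moving a small amount in the descent direction $-w$ (genuine because $w\neq0$) stays inside any ball contained in the body while strictly lowering the functional. Because $\U_N^{*}$ is full-dimensional in $\F_N^{*}$ by Proposition \ref{prop:dimension}, its topological interior coincides with its relative interior, so $\pi(\rho)$ must lie on the boundary of $\U_N^{*}$. The main obstacle is not any computation but the verification that the candidate normal $\lambda\obslash\lambda$ remains nonzero after the diagonal is projected away; this is precisely where the constraint $\rho_{ii}=1$ is essential, and it is what makes the boundary statement correct in $\F_N^{*}$ rather than merely in $\F_N$.
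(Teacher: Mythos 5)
Your proof is correct and follows essentially the same route as the paper: part (i) is the identical reduction of singularity of the \sdp matrix $\rho$ to the vanishing of $\E\bigl(\sum_k\lambda_kX_k\bigr)^2$, and part (ii) rests on the same two observations, namely the nonnegativity of $\lambda^{T}\mu\lambda$ on $\U_{N}$ with equality at $\rho$, and the fact that the unit diagonal forces at least two nonzero $\lambda_i$ so that the relevant direction survives the projection $\pi$. The only cosmetic difference is the concluding step of (ii): you invoke the supporting-hyperplane principle that a minimiser of the nonconstant linear form $\langle\cdot,\lambda\obslash\lambda\rangle_{\F_{N}^{*}}$ cannot be interior, whereas the paper exhibits explicit non-realisable matrices arbitrarily close to $\rho$ by perturbing a single off-diagonal entry in the sign that drives the quadratic form negative.
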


\begin{proof}  (i) Let $X$ be a unit field and $\rho$ its unit covariance. As a semi-definite positive matrix,  
the singularity of $\rho $ is equivalent to the existence of a non-trivial family of scalar numbers $\{\lambda _{k}:\,1 \leq  k \leq  N\}$ such that 
\begin{align*}
\sum_{1\leq  k,j\leq N}\lambda _{k}\lambda _{j}\rho _{kj}=0,
\end{align*}
which exactly means that $\E( \sum_{k=1}^{N}\lambda _{k}X_{k})^{2}=0$. 

(ii) Let $\rho$ be a singular \uc, and $\lambda_{k}, k=1,\dots ,N$, non-trivial coefficients such that 
\begin{align*}
\sum_{1\leq k,j\leq N}\lambda _{k}\lambda _{j}\rho _{kj}=0,\quad 1\leq  j \leq  N.
\end{align*}
Two at least of the $\lambda _{k}$ are non zero (because $\rho $ has $1$'s on the diagonal), say $\lambda _{i_{1}}$ and $\lambda _{i_{2}}$, and put $\sigma ={\rm sign}(\lambda _{i_{1}}\lambda _{i_{2}})\in \{-1,1\}$. 
Let us take $\varepsilon  >0$ and 
\begin{equation*}
\begin{cases}
\rho_{ij}^\varepsilon  =\rho_{j}-\sigma \varepsilon \text{ if }\{i,j\}=\{i_{1},i_{2}\}  ,\\
\rho_{ij}^\varepsilon =\rho_{ij}$ otherwise$.
\end{cases}
\end{equation*}

Then if $\rho^{\varepsilon } $ was realisable by some unit field $X^\varepsilon  $, we would have 
\begin{equation*}
\E(\sum_{k=1}^{N}\lambda_{k}X_{k}^\varepsilon  )^2=\sum_{k,l=1}^{N}\lambda_{k}\lambda_{l}\rho^\varepsilon _{kl}=\sum_{k,l=1}^{N}\rho_{kl}\lambda_{k}\lambda_{l}-2\lambda_{1}\lambda_{2}\sigma\varepsilon   =-\lambda_{i_{1}}\lambda_{i_{2}}\sigma\varepsilon  <0,
\end{equation*}
which is impossible.
Thus there are unrealisable covariances arbitrarily close from $\rho$, it follows that $\rho$ is on the boundary of $\U_{N}^*$.
\end{proof}

\begin{remark}
It is conversely not true that every $\rho$ on the boundary is   singular (otherwise finding the outer normals of $\U_{N}^*$, and thus characterising realisable \uc s, would be easy). According to Th.~\ref{thm:singular}  it would mean that $\sum_{i}\lambda_{i}X_{i}=0$ a.s. for some non-trivial family $(\lambda _{i})$, but if for instance $X$ takes the values $(-1,1,1), (-1,-1,1), (1,-1,1)$ each with probability $1/3$ yields the non-singular \uc 
\begin{equation*}
\rho_{X}=\left(\begin{array}{ccc}1 & -1/3 & -1/3 \\-1/3 & 1 & -1/3 \\-1/3 & -1/3 & 1\end{array}\right).
\end{equation*}

On the other hand $\pi (\rho_{X})$ lies on the border of $\U_{4}^*$ because it is a convex combination of three out of the  $4$ vertices of the tetrahedron  $\U_{4}^*$ (see Prop.  \ref{prop:dimension} or \cite{Mat93} p.110).

\end{remark}

A manner of describing exhaustively the topological structure of $\U_{N}^*$ is to  explicit the hypergraph structure of its vertices. The following theorem states that for $k<N$, the $k$-th order hypergraph structure of $\U_{N}^*$ is complete, in the sense that the simplex formed by any $k$-tuple of extreme points is contained in $\U_{N}^{*}$'s boundary.

\begin{theorem}

For every $k<N$, and $k$-tuple ${\u}^{1},\dots,{\u} ^{k}$ of $\{-1,1\}^N$, the $k$-dimensional simplex formed by their respective \uc s $\rho_{{\u} ^{1}},\dots,\rho_{{\u} ^{k}}$ only contains singular matrices and thus lies on the boundary of $\U_{N}^*$. 

\end{theorem}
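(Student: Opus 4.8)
The plan is to deduce singularity of every matrix in the simplex from a rank count, and then to invoke Theorem~\ref{thm:singular}(ii), which already tells us that singular unit covariances sit on $\bd\U_N^*$. The mechanism is that each vertex $\rho_{\u^i}=\u^i\otimes\u^i$, read as a full $N\times N$ matrix, is an outer product and hence has rank one; a sum of few enough such matrices cannot be invertible.

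Concretely, I would fix an arbitrary point $\rho$ of the simplex and write it, in the full matrix space $\F_N$, as
\[
\rho=\sum_{i=1}^{k}p_i\,\rho_{\u^i}=\sum_{i=1}^{k}p_i\,\u^i\otimes\u^i,\qquad p_i\ge0,\ \textstyle\sum_i p_i=1.
\]
Passing from $\U_N^*$ to $\U_N$ here loses nothing: every $\u^i\otimes\u^i$ carries $1$'s on its diagonal, so does $\rho$, and on such matrices the projection $\pi$ is a bijection between the two pictures. The displayed expression exhibits $\rho$ as a nonnegative combination of $k$ rank-one matrices, so $\mathrm{rank}(\rho)\le k<N$; this is exactly where the hypothesis $k<N$ enters. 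Hence $\rho$ is singular as an $N\times N$ matrix. By the convexity of $\U_N$ recorded in the introduction $\rho$ is moreover a genuine unit covariance, so Theorem~\ref{thm:singular} applies and places $\rho$ on the boundary of $\U_N^*$.

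Since $\rho$ was an arbitrary point of the simplex, every matrix it contains is singular and lies on $\bd\U_N^*$, which is the claim. I do not expect a serious obstacle here: the only points needing a little care are the uniformity of the rank bound over the whole simplex — on its lower faces some $p_i$ vanish, or two vertices may coincide when $\u^i=-\u^j$, but in every case the rank stays $\le k<N$ — and the bookkeeping between the triangular-array model $\U_N^*$ and the full matrices on which singularity and Theorem~\ref{thm:singular} are phrased. The genuinely load-bearing ingredient is the rank-one nature of the extreme points established in Proposition~\ref{prop:dimension}, together with the singular-implies-boundary implication already proved.
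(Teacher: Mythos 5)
Your proof is correct and rests on the same dimension count as the paper's: $k<N$ vectors $\u^1,\dots,\u^k$ span a proper subspace of $\mathbb{R}^N$, so any convex combination of the $\u^i\otimes\u^i$ is singular, and Theorem~\ref{thm:singular}(ii) then places it on the boundary. The only cosmetic difference is that you bound the rank of the sum of rank-one matrices directly, whereas the paper phrases the same fact probabilistically via a vector $\lambda$ orthogonal to all the $\u^i$ and the characterisation in Theorem~\ref{thm:singular}(i).
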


\begin{proof}

A covariance of the form
 $\rho=\sum_{i=1}^kp_{i}\rho_{\u^{i}}$ is the covariance of the unit field $X$ with law $\P(X=\u^{i})=p_{i}$, allowed to take values among the $k$ vectors $\u^{i}$. Since $k<N$, there exists a non-trivial family $\lambda _{1},\dots ,\lambda _{N}$ such that 
\begin{align*}
\sum_{j=1}^{N}\lambda _{j}\u^{i}_{j}=0
\end{align*}
 for all $i$, whence a.s. $\sum_{j}\lambda _{j}X_{j}=0$. It follows by Th. \ref{thm:singular}(i) that $\rho $  is singular. The conclusion comes by applying Th. \ref{thm:singular-boundary}.\end{proof}

In some sense $\U_{N}^*$ has the topological structure of the $d_{N}$-dimensional simplex as long as one only looks at dimensions strictly smaller than $N$. Higher dimensional facets are harder to explicit.

\section{Acknowledgements}

I am grateful to Ilya Molchanov, who through many discussions about Matheron's conjecture, contributed to this research. Vincent Delos helped me find my way around linear programming for the numerical issues.


\section*{Appendix: Convex geometry}

Let $C$ be a convex set of a  vector space $V$. The \emph{extreme points} of $C$ are the $x\in  V$ such that, for every $y,z\in C$, if $x$ lies in the segment $[y,z]$ spanned by $y$ and $z$, then either $x=y$ or $x=z$. Denote them by ${\rm ext(C )}$. All the subsequent statements are only made for finite dimensional space $V=\mathbb{R}^{d}$.  If $X$ is a subset of $V$, the \emph{convex hull} of $X$ is the smallest closed convex set of $V$ containing $X$. The \emph{relative affine space} ${\rm span}(C )$ of a convex $C$ is the vector space spanned by $C$.  In the topology of ${\rm span}(C )$, $C$ has a non-empty interior and the \emph{relative dimension} of $C$ is defined as the dimension of ${\rm span}(C )$. Say that $C$ is fully dimensional if ${\rm span}(C)=\mathbb{R}^{d}$, equivalent to the fact that $C$ has a non-empty interior in $\mathbb{R}^{d}$. Given a point $x\in  C$, denote by  ${\rm cone}(x;C)$ the smallest affine convex convex cone with vertex $x$ containing $C$.  

\subsection*{Polytopes}

A polytope $P$ of $\mathbb{R}^{d}$ is the convex hull of a finite number of points $x_{1},\dots ,x_{q}$ of $\mathbb{R}^{d}$, denoted $P={\rm conv}(x_{1},\dots ,x_{q})$. For $0 \leq k<d$, a \emph{$k$-dimensional face} of $P$ is any set $S\subseteq \partial P$ such that $S$ is the convex intersection of $P$ with a affine hyperplane and the relative dimension of $S$ is exactly $k$. For any $0 \leq  k \leq  d-1$, the data of all its $k$-dimensional facets characterise $P$. Its $0$-facets are its extreme points, also called \emph{vertices}, and the corresponding description of the form  ${\rm ext}(P )=\{x_{1},\dots ,x_{q}\}$ is called $V$-description. At the opposite, its $(d-1)$-dimensional facets are sometimes just called \emph{facets}, and yield the so-called \emph{H-description}. The latter can be understood via a finite family of affine half-spaces $H_{1},\dots ,H_{q}$ where 
\begin{align*}
H_{i}=\{y\in \mathbb{R}^{d}:\, \langle y,{\n}_{i} \rangle \leq  o_{i} \}
\end{align*}
for some ${\n}_{i}\in  \mathbb{R}^{d}$ called \emph{outer normal} and corresponding \emph{offset value} $o_{i}$, $1 \leq  i \leq  q$. Given any $x\in  {\rm \ext}(P )$, the convex cone of $C$ generated by $x$ is obtained by retaining only half spaces with $x$ in their boundary
\begin{align*}
\cone(x;V)=\bigcap _{i:\langle x,{\n}_{i} \rangle=o_{i} }H_{i}.
\end{align*}
For $k<d$, call $k$-th order hypergraph of $C$ the class of $k$-tuples of extreme points of $C$ $\{x_{1},\dots ,x_{k}\}$ such that the simplex formed by $x_{1},\dots ,x_{k}$ lies on the boundary of $C$.


\begin{thebibliography}{}

\end{thebibliography}


\begin{thebibliography}{99}


\bibitem{ChiDel99}
J.~Chil{\`e}s and P.~Delfiner.
\newblock {\em Modelling spatial uncertainty}.
\newblock John Wiley \& sons, 1999.

\bibitem{Eme10}
X.~Emery.
\newblock On the existence of mosaic and indicator random fields with
  spherical, circular, and triangular variograms.
\newblock {\em Math. Geosc.}, 42:969--984, 2010.

\bibitem{FriCha12}
T.~Frits and R.~Chaves.
\newblock Entropic inequalities and marginal problems.
\newblock arxiv preprint, 2012.

\bibitem{Fuk}
K.~Fukuda.
\newblock {\em cdd/cdd+ Reference Manual}.

\bibitem{Gal11}
B.~Galerne.
\newblock Computation of the perimeter of measurable sets via their
  covariogram. {A}pplications to random sets.
\newblock {\em Image Anal. Stereol.}, 30(1):39--51, 2011.

\bibitem{GalLac}
B.~Galerne and R.~Lachi\`eze-Rey, Second Order Realisability Problems for Random Sets via Perimeter Approximation, \emph{in preparation}.

\bibitem{JiaStiTor07}
Y.~Jiao, F.~H. Stillinger, and S.~Torquato.
\newblock Modeling heterogeneous materials via two- point correlation
  functions: Basic principles.
\newblock {\em Phys. Rev. E}, page 031110, 2007.

\bibitem{KocOhsSch03}
K.~Koch, J.~Ohser, and K.~Schladitz.
\newblock Spectral theory for random closed sets and estimating the covariance
  via frequency space.
\newblock {\em Advances in Applied Probability}, 35(3):603--613, 2003.

\bibitem{Lac}
R.~Lachi\`eze-Rey.
\newblock Checking realisability of set covariances, \emph{in preparation}.

\bibitem{LacMol11}
R.~Lachi\`eze-Rey and I.~Molchanov.
\newblock Regularity conditions in the realisability problem in applications to
  point processes and random closed sets.
\newblock {\em preprint}, 2011.

\bibitem{Lan02}
C.~Lantu\'ejoul.
\newblock {\em Geostatistical simulation: models and algorithms}.
\newblock Springer, Berlin, 2002.

\bibitem{Mar95}
K.~Markov.
\newblock On the triangular inequality in the theory of two-phase random media.
\newblock Technical report, Universite de Sofia, Faculte de Mathematiques,
  1995.

\bibitem{Mas72}
E.~Masry.
\newblock On covariance functions of unit processes.
\newblock {\em SIAM J. Appl. Math.}, 23(1):28--33, 1972.

\bibitem{Mat93}
G.~Matheron.
\newblock Une conjecture sur la covariance d'un ensemble al\'eatoire.
\newblock {\em Cahiers de g\'eostatistiques, Fascicule 3, Compte-rendu des
  journ{\'e}es de g\'eostatistique, 25-26 mai 1993, Fontainebleau}, pages
  107--113, 1993.

\bibitem{McM55}
B.~McMillan.
\newblock History of a problem.
\newblock {\em J. Soc. Ind. Appl. Math.}, 3(3):119--128, 1955.

\bibitem{Molchanov}
I.~Molchanov.
\newblock {\em Theory of random sets}.
\newblock Springer, 2005.

\bibitem{Qui08}
J.~A. Quintanilla.
\newblock Necessary and sufficient conditions for the two-point probability
  function of two-phase random media.
\newblock {\em Proc. R. Soc. A}, 464:1761--1779, 2008.

\bibitem{She63}
L.~A. Shepp.
\newblock On positive-definite functions associated with certain stochastic
  processes.
\newblock Technical report, Bell Laboratories, Murray Hill, 1963.

\bibitem{Tor02}
S.~Torquato.
\newblock {\em Random Heterogeneous Materials}.
\newblock Springer, New York, 2002.


 \end{thebibliography}
\end{document}